\renewcommand{\S}{\mathcal S}
\newcommand{\T}{\mathcal T}
\newcommand{\geo}{\mathrm{geo}}
\newtheorem{theorem}{Theorem}
\newtheorem{lemma}{Lemma}
\newtheorem{corollary}{Corollary}
\theoremstyle{definition}
\newtheorem{definition}{Definition}
\theoremstyle{example}
\newtheorem{example}{Example}
\theoremstyle{remark}
\newtheorem{remark}{Remark}
\title[Comparison of Gelfand-Tsetlin Bases]{Comparison of Gelfand-Tsetlin Bases for Alternating and Symmetric Groups}
\author{T. Geetha}
\address{Institute of Algebra and Number Theory\\University of Stuttgart\\Pfaffenwaldring 57, 70569 Stuttgart, Germany.}
\author{Amritanshu Prasad}
\address{The Institute of Mathematical Sciences (HBNI)\\CIT campus Taramani, Chennai 600113, India.\\\Letter{} \texttt{amri@imsc.res.in}\\\phone{} +91-44-22543207\\\Fax{} +91-44-22541586.}
\subjclass[2010]{20C15, 20C30, 05E10}
\keywords{Alternating group, Symmetric group, Gelfand-Tsetlin basis, Bratteli diagram, Young's orthogonal basis, Associator}
\begin{document}
\maketitle
\begin{abstract}
  Young's orthogonal basis is a classical basis for an irreducible representation of a symmetric group.
  This basis happens to be a Gelfand-Tsetlin basis for the chain of symmetric groups.
  It is well-known that the chain of alternating groups, just like the chain of symmetric groups, has multiplicity-free restrictions for irreducible representations.
  Therefore each irreducible representation of an alternating group also admits Gelfand-Tsetlin bases.
  Moreover, each such representation is either the restriction of, or a subrepresentation of, the restriction of an irreducible representation of a symmetric group.
  In this article, we describe a recursive algorithm to write down the expansion of each Gelfand-Tsetlin basis vector for an irreducible representation of an alternating group in terms of Young's orthogonal basis of the ambient representation of the symmetric group.
  This algorithm is implemented with the Sage Mathematical Software.
\end{abstract}

\section{Introduction}
\label{sec:introduction}
The pioneers of representation theory, Frobenius, Schur and Young, studied the symmetric groups as a coherent sequence rather than in isolation.
Vershik and Okounkov \cite{Vershik2005} demonstrated that their results can be recovered by using information about the restrictions of irreducible representations of $S_n$ to $S_{n-1}$ as a starting point.

In their approach, Vershik and Okounkov established, from first principles, that every irreducible representation of $S_n$, when restricted to $S_{n-1}$, has a multiplicity-free decomposition.
They used this to construct a commutative subalgebra of the group algebra of $S_n$, known as the Gelfand-Tsetlin algebra, from whose characters they constructed the irreducible representations of $S_n$, and also determined the \emph{branching rules}, namely, which irreducible representations of $S_{n-1}$ occur in the restriction of an irreducible representation of $S_n$.
Besides their original article, notable expositions of the Vershik-Okounkov approach are found in \cite{ceccherini2010representation} and \cite{Muralinotes}.

In the Vershik-Okounkov approach, the irreducible representations of $S_n$ are constructed in terms of canonical bases, called \emph{Gelfand-Tsetlin bases}, whose vectors are uniquely determined up to scaling (for more details, see Section~\ref{sec:gt-bases}).
It turns out that Young's classical seminormal and orthogonal bases are examples of Gelfand-Tsetlin bases.
The property of symmetric group representations that allows us to construct Gelfand-Tsetlin bases is that every representation of $S_n$, when restricted to $S_{n-1}$, has a multiplicity-free decomposition into irreducible representations of $S_{n-1}$.

This property also holds for the alternating groups $A_n$.
Oliver Ruff~\cite{doi:10.1142/S1005386708000382} showed how to prove this from first principles without any reference to the representation theory of symmetric groups.
He then proceeded to construct the Gelfand-Tsetlin algebras associated to alternating groups.
By analyzing the characters of these algebras, he was able to construct their irreducible representations with respect to Gelfand-Tsetlin bases, and also obtain the branching rules.
All of this was achieved without appealing to the representation theory of symmetric groups.

The goal of this article is to determine the relationship between Gelfand-Tsetlin bases of the symmetric groups and the alternating groups.
It has been known since the days of Frobenius \cite{frobenius1901charaktere} that every irreducible representation of $A_n$ is either the restriction of an irreducible representation of $S_n$, or one of two non-isomorphic equidimensional subrepresentations of such a restriction (see also \cite[Section~5.1]{fulton2013representation} and \cite[Section~4.6]{rtcv}).
It therefore makes sense to ask how a Gelfand-Tsetlin basis vector for a representation of $A_n$ may be expanded in terms of a Gelfand-Tsetlin basis for a representation of $S_n$.
In this article we explain how to obtain the coefficients in such an expansion.
We use Young's orthogonal basis as the Gelfand-Tsetlin basis for an irreducible representation of $S_n$.

\section{Gelfand-Tsetlin bases}
\label{sec:gt-bases}

Let $\{G_n\}_{n=1}^\infty$ be an increasing sequence of finite groups with $G_1$ the trivial group.
Assume that, for each $n>1$, the restriction of each irreducible complex representation of $G_n$ to $G_{n-1}$ is multiplicity-free.

Let $\{V_\lambda\mid \lambda\in \Lambda_n\}$ denote a complete set of irreducible complex representations of $G_n$.
For $\lambda\in \Lambda_n$ and $\mu\in \Lambda_{n-1}$ write $\mu\in \lambda^\downarrow$ if $V_\mu$ occurs in the restriction of $V_\lambda$ to $G_{n-1}$.
Furthermore, denote the $V_\mu$ isotypic subspace of $V_\lambda$ by $V_{(\mu,\lambda)}$.
By Schur's lemma, the subspaces in the decomposition of the restriction of $V_\lambda$ to $G_{n-1}$
\begin{equation*}
  V_\lambda = \bigoplus_{\mu \in \lambda^\downarrow} V_{(\mu,\lambda)}
\end{equation*}
are uniquely determined \cite{Vershik2005}.
Furthermore, the copy $V_{(\mu,\lambda)}$ of $V_\mu$ in the right hand side of the above sum again has a decomposition as representations of $G_{n-2}$:
\begin{displaymath}
  V_{(\mu,\lambda)} = \bigoplus_{\nu\in \mu^\downarrow} V_{(\nu,\mu,\lambda)},
\end{displaymath}
where $V_{(\nu,\mu,\lambda)}$ is the $V_\nu$ isotypic subspace of $V_{(\mu,\lambda)}$.
We have
\begin{displaymath}
  V_\lambda = \bigoplus_{\nu\in \mu^\downarrow}\bigoplus_{\mu\in \lambda^\downarrow} V_{(\nu, \mu,\lambda)}.
\end{displaymath}
Continuing in this manner all the way down to $G_1$ gives rise to a unique decomposition
\begin{equation}
  \label{eq:13}
  V_\lambda = \bigoplus_{T\in \T(\lambda)} V_T,
\end{equation}
where $\T(\lambda)$ denotes the set of all sequences $T = (\lambda^{(1)},\dotsc, \lambda^{(n)})$ with $\lambda^{(i)}\in \lambda^{(i+1)\downarrow}$ for each $i=1,\dotsc, n-1$ and $\lambda^{(n)} = \lambda$.
We shall refer to (\ref{eq:13}) as the \emph{Gelfand-Tsetlin decomposition} of $V_\lambda$.
Moreover, $V_T$ (being isomorphic to the trivial representation of $G_1$) is a one dimensional subspace of $V_\lambda$ for each $T\in \T(\lambda)$.

Choosing a non-zero vector $x_T$ from $V_T$ for each $T\in \T(\lambda)$ gives rise to a basis:
\begin{displaymath}
  \{x_T\mid T\in \T(\lambda)\}
\end{displaymath}
of $V_\lambda$ which is known as a \emph{Gelfand-Tsetlin basis}.

We record, for later use, the following recursive description of $V_T$:
\begin{lemma}
  \label{lemma:recursive}
  Let $\lambda\in \Lambda_n$, and $T = (\lambda^{(1)},\dotsc, \lambda^{(n)})\in \T(\lambda)$.
  Then a vector $x\in V_\lambda$ lies in $V_T$ if and only if
  \begin{enumerate}
  \item $x$ lies in the $V_{\lambda^{(n-1)}}$ isotypic subspace $V_{(\lambda^{(n-1)}, \lambda)}$ of $V_\lambda$, and
  \item under an isomorphism $\phi: V_{\lambda^{(n-1)}}\tilde\to V_{(\lambda^{(n-1)}, \lambda)}$, $x$ lies in the image of $V_{(\lambda^{(1)},\dotsc,\lambda^{(n-1)})}$.
  \end{enumerate}
\end{lemma}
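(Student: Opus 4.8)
The plan is to show that the lemma is essentially a restatement of the recursive construction of the Gelfand-Tsetlin decomposition, the one substantive point being that the abstract $G_{n-1}$-isomorphism $\phi$ transports the Gelfand-Tsetlin decomposition of $V_{\lambda^{(n-1)}}$ onto the corresponding decomposition of $V_{(\lambda^{(n-1)},\lambda)}$. First I would recall that, by construction, $V_T = V_{(\lambda^{(1)},\dots,\lambda^{(n)})}$ is obtained from $V_\lambda$ by a sequence of isotypic-subspace extractions: one passes first to the $V_{\lambda^{(n-1)}}$-isotypic subspace $V_{(\lambda^{(n-1)},\lambda)}$ of $V_\lambda$ viewed as a representation of $G_{n-1}$, then to the $V_{\lambda^{(n-2)}}$-isotypic subspace of that viewed as a representation of $G_{n-2}$, and so on down to $G_1$. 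Condition (1) of the lemma is precisely the first of these extractions, so what remains is to identify $\phi\bigl(V_{(\lambda^{(1)},\dots,\lambda^{(n-1)})}\bigr)$ with the result of carrying out the remaining extractions inside $V_{(\lambda^{(n-1)},\lambda)}$.

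The key observation is that an isomorphism of $G_{n-1}$-representations is in particular an isomorphism of $G_k$-representations for every $k\le n-1$, and that any equivariant isomorphism carries the $W$-isotypic subspace of its source onto the $W$-isotypic subspace of its target for each irreducible $W$. Since the restrictions in the chain $\{G_n\}$ are multiplicity-free, each such isotypic subspace is a single irreducible copy, so the restriction of $\phi$ to it is again an equivariant isomorphism onto its image, and the argument can be iterated. Running this induction from $k=n-1$ down to $k=1$ shows that $\phi$ maps the Gelfand-Tsetlin subspace $V_{(\lambda^{(1)},\dots,\lambda^{(n-1)})}\subseteq V_{\lambda^{(n-1)}}$ isomorphically onto the subspace of $V_{(\lambda^{(n-1)},\lambda)}$ cut out by the remaining isotypic extractions, which is exactly $V_T$ by the description above. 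Hence $x\in V_T$ if and only if $x$ lies in $V_{(\lambda^{(n-1)},\lambda)}$ and $\phi^{-1}(x)\in V_{(\lambda^{(1)},\dots,\lambda^{(n-1)})}$, which is the assertion of the lemma. Independence of the choice of $\phi$ is then immediate from Schur's lemma: two such isomorphisms differ by a nonzero scalar, and scaling does not change the image subspace.

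I do not expect a genuine obstacle here; the proof is a matter of unwinding definitions. The only point requiring care is the bookkeeping in the induction: one must check that the ``isotypic subspace of an isotypic subspace'', computed directly inside $V_\lambda$, agrees at each stage with the corresponding term of the intrinsic Gelfand-Tsetlin decomposition, and one must use multiplicity-freeness precisely to guarantee that each restricted map remains an isomorphism between single irreducible copies rather than merely a map between isotypic components of a priori different multiplicities.
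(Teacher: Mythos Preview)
Your proposal is correct and follows essentially the same approach as the paper's proof: both argue that restricting the Gelfand--Tsetlin decomposition of $V_\lambda$ to summands with fixed $(n-1)$st node gives the Gelfand--Tsetlin decomposition of $V_{(\lambda^{(n-1)},\lambda)}$, and that any $G_{n-1}$-isomorphism $\phi$ transports this decomposition because isotypic components are preserved under equivariant isomorphisms. Your version is simply more explicit about the inductive bookkeeping and about why the choice of $\phi$ is immaterial, but the underlying argument is the same.
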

\begin{proof}
  Let $\mu\in \lambda^\downarrow$.
  Restricting the summands in the Gelfand-Tsetlin decomposition (\ref{eq:13}) of $V_\lambda$ to those $T=(\mu^{(1)},\dotsc,\mu^{(n-1)})\in \T(\lambda)$ for which $\mu^{(n-1)}=\lambda^{(n-1)}$ gives rise to the Gelfand-Tsetlin decomposition of $V_{(\lambda^{(n-1)},\lambda)}$.
  Thus the one dimensional subspace in the Gelfand-Tsetlin decomposition of $V_{(\lambda^{(n-1)}, \lambda)}$ corresponding to $(\lambda^{(0)},\dotsc,\lambda^{(n-1)})\in T(\lambda^{(n-1)})$ is the subspace $V_T$ of $V_\lambda$.
  Since isotypic components are preserved under isomorphism, so is the Gelfand-Tsetlin decomposition, and the lemma follows.
\end{proof}
Let $\Lambda = \coprod_{n=1}^\infty \Lambda_n$.
Define a graph structure on $\Lambda$ by connecting $\lambda$ to $\mu$ by an edge if and only if $\mu\in \lambda^\downarrow$.
The resulting graph is called the  \emph{Bratteli diagram} of the sequence $\{G_n\mid n \geq 1\}$.

When $G_n=S_n$, then $\Lambda_n$ is the set of partitions of $n$.
Recall that a partition of $n$ is a sequence $\lambda = (\lambda_1,\dotsc,\lambda_l)$ of positive integers in weakly decreasing order with sum $n$.
For $\lambda\in \Lambda_n$ and $\mu\in\Lambda_{n-1}$, we have $\mu\in \lambda^\downarrow$ if $\mu$ can be obtained from $\lambda$ by reducing one of its parts by one (or deleting a part that is equal to $1$).
Partitions are visually represented by their Young diagrams.
The \textit{Young diagram} for $\lambda = (\lambda_1,\dotsc,\lambda_l)$ is a left-justified array of boxes, with $\lambda_i$ boxes in the $i$th row.
We have $\mu\in\lambda^\downarrow$ if the Young diagram of $\mu$ is obtained from the Young diagram of $\lambda$ by removing one box.

The resulting Bratteli diagram is Young's graph (see Figure~\ref{fig:young}).
\begin{figure}[h]
  \centering
  \includegraphics[width=\textwidth]{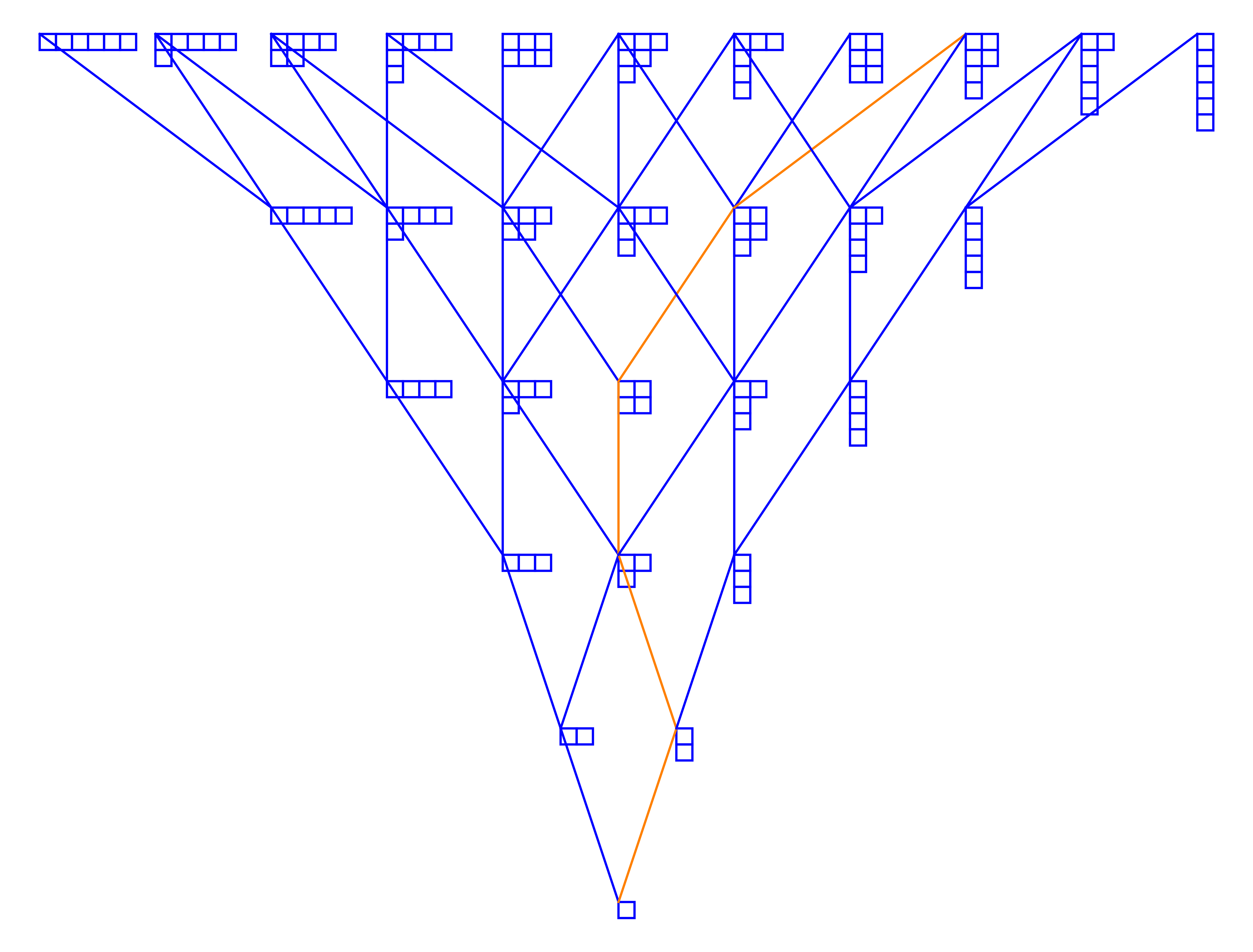}
  \caption{The first six rows of Young's graph}
  \label{fig:young}
\end{figure}
Given a partition $\lambda$, an element of $\T(\lambda)$ is nothing but a geodesic connecting the partition $(1)$ of $1$ to $\lambda$ in Young's graph.
Such a path can be represented by a \textit{standard Young tableau}.
This is a filling-in of the boxes of the Young diagram of $\lambda$ by integers $1,\dotsc, n$ increasing along rows and columns.
The Young diagram of the $i$th node in the geodesic is the set of boxes with numbers $1,\dotsc, i$ in the corresponding tableau.
For example, the path represented by orange edges in Figure~\ref{fig:young} is represented by the Young tableau
\begin{displaymath}
  \ytableaushort{13,24,5,6}.
\end{displaymath}
A particular normalization of the Gelfand-Tsetlin basis vectors results in Young's orthogonal basis of $V_\lambda$:
\begin{displaymath}
  \{v_T\mid T\in \T(\lambda)\},
\end{displaymath}
for which Young \cite{young1932quantitative} gave the following explicit formulas to determine the action of the symmetric group on $V_\lambda$ (see \cite[Theorem~2.1]{headley1996young}):
Let $s_i$ denote the $i$th simple transposition $(i, i+1)$ in $S_n$.
Let $T$ be a standard Young tableau of shape $\lambda$.
Suppose that $i$ appears in the $j$th row and $k$th column of $T$, and $i+1$ appears in the $j'$th row and $k'$th column of $T$.
Define $r = (k'-j') - (k-j)$.
Then the action of $s_i\in S_n$ on $V_\lambda$ is given by:
\begin{equation}
  \label{eq:4}
  \rho_\lambda(s_i)v_T =
  \begin{cases}
    v_T & \text{if $j=j'$,}\\
    -v_T & \text{if $k=k'$,}\\
    r^{-1}v_T + \sqrt{1-r^{-2}}v_{s_iT} &\text{otherwise}.
  \end{cases}
\end{equation}

In the last of the three cases above, since $i$ and $i+1$ lie in neither the same row nor the same column of $T$, interchanging their positions in $T$ results in a new standard Young tableau, which we have denoted by $s_iT$.

Throughout this paper, $\{v_T\mid T\in \T(\lambda)\}$ will denote Young's orthogonal basis of $V_\lambda$.

Suppose that $\mu\in \lambda^\downarrow$ and $T = (\lambda^{(1)},\dotsc,\lambda^{(n-1)})\in \T(\mu)$.
Define an element $T^\lambda\in \T(\lambda)$ by
\begin{equation}
  \label{eq:12}
  T^\lambda = (\lambda^{(1)},\dotsc,\lambda^{(n-1)},\lambda).
\end{equation}
It is clear from Young's formulae (\ref{eq:4}) that the linear map defined by
\begin{equation}
  \label{eq:9}
  v_T\mapsto v_{T^\lambda}.
\end{equation}
is an embedding of $V_\mu$ into the restriction of $V_\lambda$ to $S_{n-1}$.
Throughout this paper, whenever $\mu\in\lambda^\downarrow$, we shall identify $V_\mu$ with a subspace of $V_\lambda$ via this embedding.

\section{From symmetric groups to alternating groups}
\label{sec:from-symm-groups}

For $n\geq 2$, the symmetric group has one non-trivial multiplicative character; namely the sign character $\epsilon:S_n\to \{\pm 1\}$.
The kernel of this character is the alternating group $A_n$.
For each irreducible representation $\rho_\lambda$ of $S_n$, the representation $\epsilon\otimes \rho_\lambda:w\mapsto \epsilon(w)\rho_\lambda(w)$ is again an irreducible representation of $S_n$.
It turns out that the representation $\epsilon\otimes\rho_\lambda$ of $S_n$ is isomorphic to $\rho_{\lambda'}$, the representation corresponding to the partition $\lambda'$ that is conjugate to $\lambda$ \cite[Theorem~4.4.2]{rtcv}.
At the level of Young diagrams, conjugation is the reflection of a diagram about the principal diagonal.
For example, take $\lambda = (2^2, 1^2)$ and consider the reflection of its Young diagram about its principal diagonal:
\begin{displaymath}
  \ydiagram{2,2,1,1}\quad \longrightarrow \quad \ydiagram{4,2}.
\end{displaymath}
So $\lambda'$ is the partition $(4,2)$.
By Schur's lemma, there exists a unique (up to scaling) linear $S_n$-isomorphism $\phi_\lambda:\epsilon\otimes\rho_\lambda\to \rho_{\lambda'}$ which satisfies:
\begin{equation}
  \label{eq:2}
  \rho_{\lambda'}(s_i)\phi_\lambda + \phi_\lambda\rho_\lambda(s_i) = 0 \text{ for } 1\leq i \leq n-1.
\end{equation}
Following Headley \cite[Theorem~2.2]{headley1996young} and Stembridge \cite[Section~6]{stembridge1989eigenvalues}, we call $\phi_\lambda$ an \textit{associator} corresponding to $V_\lambda$.
We now recall the explicit computation of $\phi_\lambda$ from the identities (\ref{eq:4}) and (\ref{eq:2}) following \cite{thrall1941young,headley1996young}:
\begin{lemma}
  \label{lemma:conj}
  For every partition $\lambda$ and for every standard tableau $T$ of shape $\lambda$, the associator $\phi_\lambda:V_\lambda\to V_{\lambda'}$ satisfies
  \begin{displaymath}
    \phi_\lambda(v_T) = c_Tv_{T'},
  \end{displaymath}
  for some scalar $c_T$. Here $T'$ denotes the standard tableau conjugate to $T$ (the reflection of $T$ about its principal diagonal).
\end{lemma}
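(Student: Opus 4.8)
The plan is to induct on $n$, using the embedding (\ref{eq:9}) together with the fact that the associator is unique up to scalar. Write $V_\mu\subseteq V_\lambda$ for the image of the embedding $v_S\mapsto v_{S^\lambda}$; by the Gelfand-Tsetlin decomposition and Lemma~\ref{lemma:recursive}, this subspace is precisely the $V_\mu$-isotypic component $V_{(\mu,\lambda)}$ in the restriction of $V_\lambda$ to $S_{n-1}$. I would first record two elementary facts about conjugation of Young diagrams: (i) $\mu\in\lambda^\downarrow$ if and only if $\mu'\in(\lambda')^\downarrow$, because removing a box and reflecting about the diagonal commute; and (ii) if $S$ is a standard tableau of shape $\mu$ with $\mu\in\lambda^\downarrow$, then $(S^\lambda)' = (S')^{\lambda'}$, because reflecting a tableau reflects each of its cells and in particular carries the cell $\lambda/\mu$ (which holds the largest entry) to the cell $\lambda'/\mu'$. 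Fact (ii) says that the embedding of $V_{\mu'}$ into $V_{\lambda'}$ sends $v_{S'}$ to $v_{T'}$ whenever $T = S^\lambda$.

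The structural heart of the argument is to show that $\phi_\lambda$ restricts, on the subspace $V_{(\mu,\lambda)}$, to a scalar multiple of the associator $\phi_\mu$. Since $s_1,\dots,s_{n-1}$ generate $S_n$, identity (\ref{eq:2}) says exactly that $\phi_\lambda$ is a nonzero $S_n$-map from $\epsilon\otimes\rho_\lambda$ to $\rho_{\lambda'}$. The subspace $V_{(\mu,\lambda)}$ is invariant under the operators $(\epsilon\otimes\rho_\lambda)(g)$ (these differ from $\rho_\lambda(g)$ only by a sign), and restricting to $S_{n-1}$ it carries the representation $\epsilon\otimes\rho_\mu\cong\rho_{\mu'}$; hence $\phi_\lambda(V_{(\mu,\lambda)})$ is a subrepresentation of $\rho_{\lambda'}|_{S_{n-1}}$ isomorphic to $\rho_{\mu'}$. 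Because the restriction of $V_{\lambda'}$ to $S_{n-1}$ is multiplicity-free and $\mu'\in(\lambda')^\downarrow$ by fact (i), this image is forced to be the embedded copy $V_{\mu'}\subseteq V_{\lambda'}$. Thus $\phi_\lambda|_{V_\mu}$ is a nonzero $S_{n-1}$-map $\epsilon\otimes\rho_\mu\to\rho_{\mu'}$, and by uniqueness of the associator up to scalar, $\phi_\lambda|_{V_\mu} = c_\mu\,\phi_\mu$ for some scalar $c_\mu$.

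With this in hand the induction closes immediately. Put $\mu = \lambda^{(n-1)}$ and $S = (\lambda^{(1)},\dots,\lambda^{(n-1)})\in\T(\mu)$, so that $T = S^\lambda$. The inductive hypothesis gives $\phi_\mu(v_S) = c_S v_{S'}$, whence, using the previous paragraph and fact (ii),
\begin{equation*}
  \phi_\lambda(v_T) = \phi_\lambda(v_{S^\lambda}) = c_\mu\,\phi_\mu(v_S) = c_\mu c_S\,v_{S'} = c_\mu c_S\,v_{T'},
\end{equation*}
so one may take $c_T = c_\mu c_S$. The base case $n = 1$ is trivial, since $V_{(1)}$ is one-dimensional. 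The step I expect to require the most care is the structural one: identifying $\phi_\lambda(V_{(\mu,\lambda)})$ with exactly the embedded copy of $V_{\mu'}$, which is where multiplicity-freeness of the restriction to $S_{n-1}$ and the conjugation-symmetry of Young's graph enter. (An alternative, non-inductive proof would observe that $v_T$ is the simultaneous eigenvector of the Jucys--Murphy elements whose eigenvalue for the $k$-th element is the content of the cell of $k$ in $T$; since $\phi_\lambda$ anticommutes with every transposition it negates all of these eigenvalues, and the resulting eigenvalue sequence is exactly that of $v_{T'}$, forcing $\phi_\lambda(v_T)$ to be a multiple of $v_{T'}$.)
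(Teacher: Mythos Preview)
Your proof is correct and follows the same inductive argument as the paper: restrict $\phi_\lambda$ to the $V_\mu$-isotypic component of $V_\lambda$, identify the image with the embedded copy of $V_{\mu'}$ inside $V_{\lambda'}$, and close the induction via $(S^\lambda)'=(S')^{\lambda'}$. The paper is terser (it simply calls the restriction ``an associator $\phi_\mu$'', absorbing your scalar $c_\mu$ into the choice of $\phi_\mu$), whereas you make the role of multiplicity-freeness and fact~(ii) explicit; the parenthetical Jucys--Murphy argument you sketch at the end is a sound alternative route not taken in the paper.
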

\begin{proof}
  We proceed by induction on $n$, the sum of the parts of $\lambda$.
  When $n\leq 2$, then $V_\lambda$ is always one dimensional, and the result follows trivially.
  Now suppose that $n>2$ and the result holds for $n-1$.
  Since edge relations in Young's graph are preserved by conjugation, the restriction of $V_{\lambda'}$ to $S_{n-1}$  is given by
  \begin{displaymath}
    V_{\lambda'} = \bigoplus_{\mu \in \lambda^\downarrow} V_{\mu'}.
  \end{displaymath}
  Since $\epsilon\otimes \rho_{\mu'}$ is isomorphic to $\rho_\mu$ as a representation of $S_{n-1}$,  an associator $\phi_\lambda:V_\lambda\to V_{\lambda'}$ restricts to an associator $\phi_\mu: V_\mu\to V_{\mu'}$.
  For each $T = (\lambda^{(1)}, \dotsc,\lambda^{(n)})\in \T(\lambda)$, let $T^\downarrow$ denote the element $(\lambda^{(1)},\dotsb,\lambda^{(n-1)})\in \T(\lambda^{(n-1)})$.
  If $T^\downarrow$ has shape $\mu$, then $v_T$ is the image of $v_{T^\downarrow}$ in $V_\lambda$ under the embedding defined by (\ref{eq:9}).
  Thus $\phi_\lambda(v_T) = \phi_\mu(v_{T^\downarrow})$.
  By induction $\phi_\mu(v_{T^\downarrow}) = c_{T^\downarrow}v_{{T^\downarrow}'}$.
  Since the image of $v_{T^{\downarrow\prime}}$ in $V_{\lambda'}$ under the embedding defined by (\ref{eq:9}) is $v_{T'}$, we have $\phi_\lambda(v_T) = c_Tv_{T'}$ for some constant $c_T$.
\end{proof}
\begin{corollary}
 For every partition $\lambda$, the associator $\phi_\lambda$ maps a GT-basis to a GT-basis.
\end{corollary}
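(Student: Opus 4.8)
The plan is to read the statement off directly from Lemma~\ref{lemma:conj}. First I would observe that $\phi_\lambda$, being a nonzero intertwiner between the irreducible representations $\epsilon\otimes\rho_\lambda$ and $\rho_{\lambda'}$, is a linear isomorphism by Schur's lemma; in particular it is injective, so the scalars $c_T$ produced by Lemma~\ref{lemma:conj} are all nonzero. Next I would record the elementary fact that conjugation $T\mapsto T'$ is a bijection from the standard tableaux of shape $\lambda$ onto the standard tableaux of shape $\lambda'$, i.e.\ a bijection $\T(\lambda)\xrightarrow{\sim}\T(\lambda')$: reflecting a Young diagram about its principal diagonal carries a filling increasing along rows and columns to another such filling, and this operation is involutive.

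Now let $\{x_T\mid T\in\T(\lambda)\}$ be an arbitrary Gelfand-Tsetlin basis of $V_\lambda$. Since $x_T$ and the Young orthogonal basis vector $v_T$ both lie in the one-dimensional space $V_T$ occurring in the Gelfand-Tsetlin decomposition~(\ref{eq:13}) of $V_\lambda$, we may write $x_T = d_T v_T$ with $d_T\neq 0$. Applying $\phi_\lambda$ and invoking Lemma~\ref{lemma:conj} gives $\phi_\lambda(x_T) = d_T c_T v_{T'}$, a nonzero scalar multiple of $v_{T'}$; since $v_{T'}$ spans the line $V_{T'}$ in the Gelfand-Tsetlin decomposition of $V_{\lambda'}$, so does $\phi_\lambda(x_T)$. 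As $T$ runs over $\T(\lambda)$, the tableau $T'$ runs over all of $\T(\lambda')$, so $\{\phi_\lambda(x_T)\mid T\in\T(\lambda)\}$ consists of exactly one nonzero vector from each one-dimensional summand $V_S$, $S\in\T(\lambda')$, and is therefore a Gelfand-Tsetlin basis of $V_{\lambda'}$ by definition.

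There is essentially no obstacle here: all the substance is in Lemma~\ref{lemma:conj}, and the only points that need a word of comment are the nonvanishing of the $c_T$ (coming from the injectivity of $\phi_\lambda$) and the fact that a Gelfand-Tsetlin basis is determined only up to independent rescaling of its vectors, so that the family $\{d_T c_T v_{T'}\}$, obtained from Young's orthogonal basis of $V_{\lambda'}$ by rescaling and relabelling along the bijection $T\mapsto T'$, is again a Gelfand-Tsetlin basis. If one prefers, one may run the argument only for Young's orthogonal basis $\{v_T\}$ itself and appeal to this rescaling remark a single time at the end.
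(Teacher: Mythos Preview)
Your argument is correct and is precisely the reasoning the paper intends: the corollary is stated immediately after Lemma~\ref{lemma:conj} with no separate proof, so the paper regards it as an immediate consequence of $\phi_\lambda(v_T)=c_Tv_{T'}$, and you have simply spelled out the easy details (nonvanishing of $c_T$ via injectivity of $\phi_\lambda$, bijectivity of $T\mapsto T'$, and the rescaling freedom in the definition of a Gelfand--Tsetlin basis).
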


Lemma~\ref{lemma:conj} reduces the calculation of $\phi_\lambda$ to the determination of the constants $c_T$ for each $T\in \T(\lambda)$.
The identity (\ref{eq:2}) is equivalent to
\begin{equation}
  \label{eq:5}
  \rho_{\lambda'}(s_i)(\phi_\lambda(v_T)) + \phi_\lambda(\rho_\lambda(s_i)(v_T)) = 0 \text{ for } 1\leq i \leq n-1, \text{ and all } T\in \T(\lambda).
\end{equation}
If $i$ and $i+1$ lie in the same row of $T$, they lie in the same column of $T'$.
Using (\ref{eq:4}), we have
\begin{align*}
  \rho_\lambda(s_i)(\phi_\lambda(v_T)) + \phi_\lambda(\rho_\lambda(s_i)(v_T)) & =
  \rho_\lambda(s_i)(c_Tv_{T'}) + \phi_\lambda(v_T)\\
  & = -c_Tv_{T'} + c_Tv_{T'}\\
  & = 0.
\end{align*}
A similar calculation shows that (\ref{eq:5}) is satisfied if $i$ and $i+1$ occur in the same column of $T$.
Now consider the case where $i$ and $i+1$ are neither in the same column, nor in the same row of $T$.
Applying (\ref{eq:4}) and using the fact that when $T$ is replaced by $T'$, then $r$ becomes $-r$, the left hand side of (\ref{eq:5}) simplifies to
\begin{multline*}
  \rho_\lambda(s_i)(c_Tv_{T'}) + \phi_\lambda(r^{-1}v_T + \sqrt{1-r^{-2}} v_{s_iT}) \\= c_T(-r^{-1}v_{T'} + \sqrt{1-r^{-2}}v_{s_iT'})  + r^{-1}c_Tv_{T'} + \sqrt{1-r^{-2}}c_{s_iT}v_{s_iT'}\\
  = (c_T + c_{s_iT})\sqrt{1-r^{-2}} v_{s_iT'}.
\end{multline*}
Thus, (\ref{eq:5}) is satisfied if and only if we can find a system $\{c_T\mid T\in \T(\lambda)\}$ of scalars such that, for all $T\in \T(\lambda)$ and all $i$ such that $i$ and $i+1$ do not lie in the same row or same column of $T$,
\begin{displaymath}
  c_{s_iT} = -c_T.
\end{displaymath}
For each partition $\lambda$ fix a standard tableau $T_\lambda$ of shape $\lambda$.
For each tableau $T\in \T(\lambda)$ let $w_T$ be the permutation which takes each entry of $T_\lambda$ to the corresponding entry of $T$.
Then, if we fix $c_{T_\lambda}$ arbitrarily, and define
\begin{equation}
  \label{eq:6}
  c_T = c_{T_\lambda} \epsilon(w_T),
\end{equation}
then $\phi_\lambda$ satisfies (\ref{eq:2}).

When $\lambda$ is self-conjugate, then one may additionally require that $\phi_\lambda^2 = \mathrm{id}_{V_\lambda}$.
The identity $\phi_\lambda^2(v_{T_{\lambda}}) = v_{T_\lambda}$ corresponds to the condition:
\begin{displaymath}
  c_{T_\lambda'}c_{T_\lambda} = 1,
\end{displaymath}
which, by (\ref{eq:6}), reduces to
\begin{displaymath}
  c_{T_\lambda}^2 = \epsilon(w_{T_\lambda'}).
\end{displaymath}
Therefore there are two possible choices of $\phi_\lambda$.
Note that $w_{T_\lambda'}$ is the permutation which interchanges each entry of $T_\lambda$ with the corresponding entry of $T_{\lambda'}$.
Its fixed points are the entries on the diagonal.
It is therefore a composition of $(n-d(\lambda))/2$ involutions, where
\begin{displaymath}
  d(\lambda) := \max\{i\mid i\leq \lambda_i\}
\end{displaymath}
is the length of the diagonal in the Young diagram of $\lambda$.
Hence the two possible choices for $\phi_\lambda$ are given by:
\begin{equation}
  \label{eq:11}
  \phi_\lambda(v_T) = \pm i^{(n-d(\lambda))/2}\epsilon(w_T)v_{T'}.
\end{equation}

Given a self-conjugate partition $\lambda$, set $d=d(\lambda)$.
Then $(d,d)$ is the last cell on the principal diagonal of the Young diagram of $\lambda$.
If $(d,d)$ is a removable cell, then removing it results in the unique self-conjugate partition $\mu\in \lambda^\downarrow$.
If $(d,d)$ is not removable, $(d+1,d+1)$ is an addable cell of $\lambda$, and adding it results in the unique self-conjugate partition $\mu$ such that $\lambda\in \mu^\downarrow$.
\begin{definition}
\label{definition:self-conj-cover}
  [Self-conjugate cover]
  A \textit{self-conjugate cover} is a pair $(\mu,\lambda)$ of self-conjugate partitions such that $\mu\in \lambda^\downarrow$.
\end{definition}
The pairs $((2,1), (2^2)$, $((3,1^2),(3,2,1))$, $((4,1^3),(4,2,1))$, and\linebreak $((3^2,2),(3^3))$ are some examples of self-conjugate covers.

Every self-conjugate partition is part of a unique self-conjugate cover; it is either the smaller or the larger partition in that pair.
We would like to arrange our choice of signs in (\ref{eq:11}) in such a way that when $(\mu,\lambda)$ is a self-conjugate cover, then $\phi_\lambda|_{V_\mu} = \phi_\mu$.
This is done by making judicious choices of $T_\lambda$.
As in Defintion~\ref{definition:self-conj-cover}, let $\mu$ denote the smaller partition, and $\lambda$ the larger partition, of a self-conjugate cover $(\mu, \lambda)$. Choose $T_\mu$ to be the ``row superstandard tableau'' of shape $\mu$, the tableau for which, when the rows are read from left to right and top to bottom, we get the integers $1,\dotsc,n$ in their natural order \cite[Section~A3.5]{bjorner2006combinatorics}.
Define $T_\lambda$ to be the tableau obtained by adding a box containing $n$ to the principal diagonal of $T_\mu$.
In the notation of (\ref{eq:12}), $T_\lambda = T_\mu^\lambda$.
For example,
\begin{displaymath}
  T_{(3, 1^2)} = \ytableaushort{123,4,5},\text{ and } T_{(3,2,1)} = \ytableaushort{123,46,5}.
\end{displaymath}

We now fix $\phi_\lambda$ by setting:
\begin{equation}
  \label{eq:1}
  \phi_\lambda(v_T) = i^{(n-d(\lambda))/2} \epsilon(w_T) v_{T'}.
\end{equation}
for every self-conjugate partition $\lambda$.
This is a more careful version of \cite[Theorem~V]{thrall1941young} (see also, \cite[Theorem~2.2]{headley1996young}), where $w_T$ is defined with respect to $T_\lambda$ specified above.

\begin{example}
  The factors $i^{(n-d(\lambda))/2}$ for all self-conjugate partitions $\lambda$ of integers less than $10$ are as follows:
  \Small{
  \begin{displaymath}
    \begin{array}{cccccccccc}
      \hline
      \lambda & (2, 1) & (2^2) & (3, 1^2) & (3, 2, 1) & (4, 1^3) & (4, 2, 1^2) & (3^2, 2) & (3^3) & (5, 1^4) \\
      \hline
      c_{T_\lambda} & i & i & -1 & -1 & -i & -i & -i & -i & 1\\
      \hline
     \end{array}
  \end{displaymath}
}
\end{example}
For each self-conjugate partition $\lambda$, define the representations $V_\lambda^+$ and $V_\lambda^-$ of $A_n$ as the eigenspaces for $\phi_\lambda$ with eigenvalues $+1$ and $-1$ respectively.
We have (see \cite[Section~5.1]{fulton2013representation} and \cite[Section~4.6]{rtcv}):
\begin{theorem}
  \label{theorem:an-classification-reps}
  For each $n\geq 2$, and for each $\lambda$ a partition of $n$,
  \begin{enumerate}
  \item if $\lambda\neq \lambda'$, then the restrictions of $V_\lambda$ and $V_{\lambda'}$ to $A_n$ are isomorphic irreducible representations of $A_n$
  \item if $\lambda = \lambda'$, then $V_{\lambda}^+$ and $V_\lambda^-$ are non-isomorphic irreducible representations of $A_n$.
  \end{enumerate}
  Moreover these cases give a complete set of representatives of isomorphism classes of irreducible representations of $A_n$.
\end{theorem}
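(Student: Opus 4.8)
The plan is to derive the theorem from Clifford theory for the index-two normal subgroup $A_n$ of $S_n$, using the associator $\phi_\lambda$ constructed above as the explicit instrument that produces the splitting when $\lambda=\lambda'$. The one computational input is the projection formula: since $\mathrm{Ind}_{A_n}^{S_n}\mathbf 1\cong\mathbf 1\oplus\epsilon$ as representations of $S_n$, for any representation $V$ of $S_n$ one has
\begin{equation*}
  \mathrm{Ind}_{A_n}^{S_n}\bigl(\mathrm{Res}_{A_n}V\bigr)\ \cong\ V\otimes\mathrm{Ind}_{A_n}^{S_n}\mathbf 1\ \cong\ V\oplus(\epsilon\otimes V).
\end{equation*}
Applying this with $V=V_\lambda$, using $\epsilon\otimes V_\lambda\cong V_{\lambda'}$, and invoking Frobenius reciprocity gives, for all partitions $\lambda,\mu$ of $n$,
\begin{equation*}
  \dim\mathrm{Hom}_{A_n}\bigl(\mathrm{Res}_{A_n}V_\lambda,\,\mathrm{Res}_{A_n}V_\mu\bigr)\ =\ \dim\mathrm{Hom}_{S_n}\bigl(V_\lambda,\,V_\mu\oplus V_{\mu'}\bigr)\ =\ \delta_{\lambda,\mu}+\delta_{\lambda,\mu'}.
\end{equation*}
Every assertion of the theorem will be read off from this single identity.

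Taking $\mu=\lambda$, the algebra $\mathrm{End}_{A_n}(\mathrm{Res}_{A_n}V_\lambda)$ has dimension $1+\delta_{\lambda,\lambda'}$ and, being the endomorphism algebra of a semisimple module, is semisimple. If $\lambda\neq\lambda'$ it is one-dimensional, so $\mathrm{Res}_{A_n}V_\lambda$ is irreducible; moreover $\mathrm{Res}_{A_n}V_\lambda\cong\mathrm{Res}_{A_n}(\epsilon\otimes V_\lambda)\cong\mathrm{Res}_{A_n}V_{\lambda'}$ because $\epsilon$ is trivial on $A_n$, which is part~(1). If $\lambda=\lambda'$ the endomorphism algebra has dimension $2$; a two-dimensional unital algebra is automatically commutative, and a two-dimensional commutative semisimple algebra over the complex numbers is a direct product of two copies of the complex numbers, so $\mathrm{Res}_{A_n}V_\lambda$ is a direct sum of exactly two pairwise non-isomorphic irreducible representations of $A_n$.

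It remains, for $\lambda=\lambda'$, to match these two irreducibles with $V_\lambda^+$ and $V_\lambda^-$. Every element of $A_n$ is a product of an even number of simple transpositions, so \eqref{eq:2} shows that $\phi_\lambda$ commutes with $\rho_\lambda(w)$ for all $w\in A_n$; hence $\phi_\lambda\in\mathrm{End}_{A_n}(\mathrm{Res}_{A_n}V_\lambda)$. By the normalization \eqref{eq:1} and the discussion preceding it, $\phi_\lambda^2=\mathrm{id}$, so $\mathrm{Res}_{A_n}V_\lambda=V_\lambda^+\oplus V_\lambda^-$. Both summands are nonzero, for if, say, $V_\lambda^-=0$ then $\phi_\lambda=\mathrm{id}$, which contradicts the fact that $\phi_\lambda$ anticommutes with the invertible operator $\rho_\lambda(s_1)$. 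Thus $V_\lambda^+$ and $V_\lambda^-$ are two distinct nonzero proper $A_n$-submodules whose direct sum is $\mathrm{Res}_{A_n}V_\lambda$; since the latter has exactly two nonzero proper submodules, namely its two non-isomorphic simple summands, these must be $V_\lambda^+$ and $V_\lambda^-$. This is part~(2).

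For completeness, let $W$ be an arbitrary irreducible representation of $A_n$. Then $\mathrm{Ind}_{A_n}^{S_n}W$ is nonzero and hence contains some $V_\lambda$, so by Frobenius reciprocity $W$ occurs in $\mathrm{Res}_{A_n}V_\lambda$; by the previous steps $W\cong\mathrm{Res}_{A_n}V_\lambda$ if $\lambda\neq\lambda'$, and $W\cong V_\lambda^+$ or $W\cong V_\lambda^-$ if $\lambda=\lambda'$. That the list obtained by taking one partition from each pair $\{\lambda,\lambda'\}$ with $\lambda\neq\lambda'$, together with $V_\lambda^+$ and $V_\lambda^-$ for every $\lambda=\lambda'$, contains no repetitions follows once more from the displayed $\mathrm{Hom}$-dimension formula: when $\lambda\neq\lambda'$ and $\mu\neq\mu'$ one has $\mathrm{Res}_{A_n}V_\lambda\cong\mathrm{Res}_{A_n}V_\mu$ precisely when $\mu\in\{\lambda,\lambda'\}$; when $\lambda=\lambda'$ and $\mu\neq\mu'$ the relevant $\mathrm{Hom}$-space vanishes, so no constituent of $\mathrm{Res}_{A_n}V_\lambda$ occurs in $\mathrm{Res}_{A_n}V_\mu$; and for distinct self-conjugate $\lambda\neq\mu$ it vanishes as well. (As a numerical check, this produces $\tfrac12(p(n)-q(n))+2q(n)=\tfrac12(p(n)+3q(n))$ isomorphism classes, where $q(n)$ is the number of self-conjugate partitions of $n$, matching the number of conjugacy classes of $A_n$.)

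The result is classical, so no step is genuinely hard; the points requiring care are the projection-formula and Frobenius-reciprocity computation that drives everything, the ring-theoretic passage from ``$\mathrm{End}$ is two-dimensional'' to ``exactly two non-isomorphic constituents'', and the verification that the concrete eigenspaces $V_\lambda^\pm$ coincide with the abstract Clifford-theoretic summands, which is where the properties of $\phi_\lambda$ established in this section are used.
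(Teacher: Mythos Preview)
Your proof is correct. Note, however, that the paper does not actually prove this theorem: it simply cites \cite[Section~5.1]{fulton2013representation} and \cite[Section~4.6]{rtcv} and moves on. What you have written is essentially the standard Clifford-theoretic argument one finds in those references, organized around the single Frobenius-reciprocity identity
\[
\dim\mathrm{Hom}_{A_n}(\mathrm{Res}\,V_\lambda,\mathrm{Res}\,V_\mu)=\delta_{\lambda,\mu}+\delta_{\lambda,\mu'}.
\]
The one place where your write-up goes beyond a straight citation is the last step of part~(2): you explicitly check that the eigenspace decomposition $V_\lambda=V_\lambda^+\oplus V_\lambda^-$ coming from the paper's normalized associator $\phi_\lambda$ coincides with the abstract decomposition into two non-isomorphic $A_n$-irreducibles. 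This is exactly the link the paper needs (and tacitly assumes) in order to use the labels $V_\lambda^\pm$ consistently in the subsequent sections, so it is worth spelling out as you did. The conjugacy-class count at the end is a pleasant sanity check but is not needed for the proof, since completeness and irredundancy have already been established directly.
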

\section{Branching rules for alternating groups}
The branching rules for the tower of alternating groups are easily deduced from those for symmetric groups.
The only issue that needs care is the behavior of the sign when branching from a self-conjugate partition to a self-conjugate partition, which is ultimately determined by our choice of normalization (\ref{eq:1}) of the associator.
\begin{lemma}
  Suppose that $\lambda$ is a self-conjugate partition, and that $\mu\in \lambda^\downarrow$ is also a self-conjugate partition.
  Then the restriction of the representation $V_\lambda^+$ of $A_n$ to $A_{n-1}$ contains $V_\mu^+$, and the restriction of $V_\lambda^-$ to $A_{n-1}$ contains $V_\mu^-$.
\end{lemma}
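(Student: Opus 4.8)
The plan is to deduce the lemma from the single identity $\phi_\lambda|_{V_\mu}=\phi_\mu$, which the careful choices $T_\mu$ (row superstandard) and $T_\lambda=T_\mu^\lambda$ were set up to guarantee.

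First I would pin down which cell of $\lambda$ is removed to obtain $\mu$. Writing $\lambda/\mu=\{(a,b)\}$ (the removed cell is the last in its row, so $b=\lambda_a$), self-conjugacy of both $\lambda$ and $\mu$ forces $a=b$: indeed $\mu_a=\lambda_a-1$, while $\mu'_a=\lambda_a$ whenever $a\neq b$, contradicting $\mu=\mu'$. As recorded in the discussion preceding Definition~\ref{definition:self-conj-cover}, the only removable diagonal cell of the self-conjugate partition $\lambda$ is $(d,d)$ with $d=d(\lambda)$; hence $\lambda/\mu=\{(d,d)\}$, the pair $(\mu,\lambda)$ is a self-conjugate cover, and $d(\mu)=d-1$. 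I would also recall that the map $\iota\colon v_S\mapsto v_{S^\lambda}$ of (\ref{eq:9}) is an $S_{n-1}$-equivariant, hence $A_{n-1}$-equivariant, embedding of $V_\mu$ into $V_\lambda$.

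The heart of the argument is the claim that $\phi_\lambda$ preserves $\iota(V_\mu)$ and restricts on it to $\phi_\mu$, i.e. $\phi_\lambda\circ\iota=\iota\circ\phi_\mu$. To see this I would evaluate both sides on $v_S$ for an arbitrary $S\in\T(\mu)$ using the normalization (\ref{eq:1}), and check the three ingredients that make the constants match: (i) conjugation fixes the cell $(d,d)$, so $(S^\lambda)'=(S')^\lambda$, whence $\iota(v_{S'})=v_{(S^\lambda)'}$ (here $S'\in\T(\mu')=\T(\mu)$ since $\mu=\mu'$); (ii) both $T_\lambda=T_\mu^\lambda$ and $S^\lambda$ carry the entry $n$ in the cell $(d,d)$, so the permutation $w_{S^\lambda}$ taking $T_\lambda$ to $S^\lambda$ fixes $n$ and equals, as an element of $S_{n-1}\subset S_n$, the permutation $w_S$ taking $T_\mu$ to $S$; hence $\epsilon(w_{S^\lambda})=\epsilon(w_S)$; (iii) $(n-d(\lambda))/2=((n-1)-d(\mu))/2$ because $d(\mu)=d(\lambda)-1$. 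Combining these, $\phi_\lambda(\iota(v_S))=i^{(n-d(\lambda))/2}\epsilon(w_S)\,v_{(S^\lambda)'}=\iota\bigl(i^{((n-1)-d(\mu))/2}\epsilon(w_S)\,v_{S'}\bigr)=\iota(\phi_\mu(v_S))$, proving the claim.

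Finally I would conclude: since $\iota$ is an $A_{n-1}$-isomorphism onto its image and intertwines $\phi_\mu$ with $\phi_\lambda|_{\iota(V_\mu)}$, it carries the $(+1)$-eigenspace $V_\mu^+$ of $\phi_\mu$ onto an $A_{n-1}$-submodule of the $(+1)$-eigenspace $V_\lambda^+$ of $\phi_\lambda$, and likewise $V_\mu^-$ into $V_\lambda^-$. By Theorem~\ref{theorem:an-classification-reps} the spaces $V_\mu^{\pm}$ are nonzero and irreducible over $A_{n-1}$, so $V_\mu^+$ occurs in the restriction of $V_\lambda^+$ to $A_{n-1}$, and $V_\mu^-$ occurs in that of $V_\lambda^-$. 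The only step requiring genuine care is (i)--(iii) above — the compatibility of the normalizing factors $i^{(n-d)/2}\epsilon(w_T)$ with the embedding $\iota$ — and this works out precisely because $T_\lambda$ was chosen to be $T_\mu^\lambda$; there is no deeper obstacle.
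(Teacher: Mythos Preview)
Your proof is correct and follows the same strategy as the paper: both arguments reduce the lemma to the identity $\phi_\lambda|_{V_\mu}=\phi_\mu$ (equivalently $\phi_\lambda\circ\iota=\iota\circ\phi_\mu$) and then read off the containment of eigenspaces. The paper compresses your steps (i)--(iii) into the single observation that the number of off-diagonal boxes is unchanged, so $c_{T_\lambda}=c_{T_\mu}$; your version simply makes explicit the ingredients (the shape of the removed cell, $w_{S^\lambda}=w_S$, and $n-d(\lambda)=(n-1)-d(\mu)$) that the paper leaves implicit.
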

\begin{proof}
  Whenever $\lambda$ and $\mu\in \lambda^\downarrow$ are both self-conjugate, the Young diagram of $\lambda$ is obtained from the Young diagram of $\mu$ by adding a box on the diagonal.
  It follows that the number of boxes strictly below the diagonal in the Young diagram of $\lambda$ is the same as the number of boxes strictly below the diagonal in $\mu$, whence $c_{T_\lambda} = c_{T_\mu}$ (here $T_\lambda$ and $T_\mu$ are the tableaux chosen in the previous section).
Therefore, when $V_\mu$ is identified as a subspace of $V_\lambda$ via (\ref{eq:9}), we have $\phi_\lambda|_{V_\mu} = \phi_\mu$.
Since $V_\mu^\pm$ and $V_\lambda^\pm$ are the $\pm 1$ eigenspaces of $\phi_\mu$ and $\phi_\lambda$ respectively, the lemma follows.
\end{proof}

\begin{theorem}
  \label{theorem:an-branching}
  For a representation $W$ of $A_{n-1}$ and a representation $V$ of $A_n$, write $W\sqsubset V$ if $W$ occurs in the restriction of $V$ to $A_{n-1}$.
  Suppose that $\mu$ and $\lambda$ are partitions such that $\mu\in \lambda^\downarrow$ or $\mu'\in \lambda^\downarrow$.
  Then the relation ``$\sqsubset$'' is determined by the following table:
  \begin{displaymath}
    \begin{array}{|c|c|c|}
      \hline
      & \lambda \neq \lambda' & \lambda = \lambda'\\
      \hline
      \mu \neq \mu' & V_\mu \sqsubset V_\lambda & \begin{matrix}V_\mu \sqsubset V_\lambda^+\\ V_\mu \sqsubset V_\lambda^-\end{matrix}\\
      \hline
      \mu = \mu' & \begin{matrix}V_\mu^+ \sqsubset V_\lambda\\ V_\mu^- \sqsubset V_\lambda\end{matrix} & \begin{matrix}V_\mu^+ \sqsubset V_\lambda^+\\ V_\mu^- \sqsubset V_\lambda^-\end{matrix}\\
      \hline
    \end{array}
  \end{displaymath}
\end{theorem}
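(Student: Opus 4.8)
The plan is to derive the table from the symmetric-group branching rule $\mathrm{Res}^{S_n}_{S_{n-1}}V_\lambda=\bigoplus_{\nu\in\lambda^\downarrow}V_\nu$, from Theorem~\ref{theorem:an-classification-reps}, and from the lemma just proved. The basic observation is that $A_{n-1}$ sits inside $S_n$ both as a subgroup of $A_n$ and as a subgroup of $S_{n-1}$, so restricting $V_\lambda$ from $S_n$ to $A_{n-1}$ along either route yields the same representation; this gives
\begin{displaymath}
  \mathrm{Res}^{S_n}_{A_{n-1}}V_\lambda \;=\; \bigoplus_{\nu\in\lambda^\downarrow}\mathrm{Res}^{S_{n-1}}_{A_{n-1}}V_\nu ,
\end{displaymath}
and by Theorem~\ref{theorem:an-classification-reps} each summand on the right is the irreducible $A_{n-1}$-representation attached to $\nu$ (equivalently to $\nu'$) when $\nu\neq\nu'$, and is $V_\nu^+\oplus V_\nu^-$ when $\nu=\nu'$. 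I would also record at the outset that $\mu\in\lambda^\downarrow$ if and only if $\mu'\in(\lambda')^\downarrow$; hence when $\lambda=\lambda'$ the hypothesis on $(\mu,\lambda)$ is equivalent to $\mu\in\lambda^\downarrow$ and to $\mu'\in\lambda^\downarrow$ simultaneously, whereas when $\lambda\neq\lambda'$ at most one of $\mu,\mu'$ can lie in $\lambda^\downarrow$ (both lying in $\lambda^\downarrow$ would force $\lambda$ to be self-conjugate).

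For the first column, $\lambda\neq\lambda'$, so by Theorem~\ref{theorem:an-classification-reps} $\mathrm{Res}^{S_n}_{A_n}V_\lambda$ is irreducible, and it suffices to read off the $A_{n-1}$-constituents of the displayed decomposition. If $\mu\neq\mu'$, exactly one of $V_\mu,V_{\mu'}$ occurs among the summands, and it restricts to the $A_{n-1}$-irreducible attached to $\mu$; hence $V_\mu\sqsubset V_\lambda$. If $\mu=\mu'$, then $\mu\in\lambda^\downarrow$, the summand $V_\mu$ occurs (with multiplicity one, by multiplicity-freeness), and its restriction to $A_{n-1}$ is $V_\mu^+\oplus V_\mu^-$; hence both $V_\mu^+\sqsubset V_\lambda$ and $V_\mu^-\sqsubset V_\lambda$.

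For the second column, $\lambda=\lambda'$, so $\mathrm{Res}^{S_n}_{A_n}V_\lambda=V_\lambda^+\oplus V_\lambda^-$, and the task is to decide how the constituents of $\mathrm{Res}^{S_n}_{S_{n-1}}V_\lambda$ are distributed by the associator $\phi_\lambda$. I would use two facts. First, by (\ref{eq:2}) the operator $\phi_\lambda$ commutes with $\rho_\lambda(w)$ for every $w\in A_n$ (because $\epsilon(w)=1$), so its eigenspaces $V_\lambda^\pm$ are $A_n$-subrepresentations. Second, by Lemma~\ref{lemma:conj} together with the embedding (\ref{eq:9}), $\phi_\lambda$ carries the subspace $V_\nu\subset V_\lambda$ onto $V_{\nu'}\subset V_\lambda$ for each $\nu\in\lambda^\downarrow$. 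If $\mu\neq\mu'$, then $\mu$ and $\mu'$ are distinct elements of $\lambda^\downarrow$, the summand $V_\mu\oplus V_{\mu'}$ of $\mathrm{Res}^{S_n}_{S_{n-1}}V_\lambda$ is $\phi_\lambda$-stable, and $\phi_\lambda$ interchanges its two blocks; hence its $+1$- and $-1$-eigenspaces are $\{x+\phi_\lambda(x):x\in V_\mu\}$ and $\{x-\phi_\lambda(x):x\in V_\mu\}$, each an $A_{n-1}$-subrepresentation isomorphic to $\mathrm{Res}^{S_{n-1}}_{A_{n-1}}V_\mu$. One of these lies in $V_\lambda^+$ and the other in $V_\lambda^-$, so $V_\mu\sqsubset V_\lambda^+$ and $V_\mu\sqsubset V_\lambda^-$. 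Finally, if $\mu=\mu'$, then $V_\mu^+\sqsubset V_\lambda^+$ and $V_\mu^-\sqsubset V_\lambda^-$ are exactly the content of the lemma proved immediately before the theorem; moreover $V_\mu$ occurs with multiplicity one in $\mathrm{Res}^{S_n}_{S_{n-1}}V_\lambda$ and no other $V_\nu$ with $\nu\in\lambda^\downarrow$ restricts to a representation of $A_{n-1}$ containing $V_\mu^+$ or $V_\mu^-$, so each of $V_\mu^+$ and $V_\mu^-$ occurs exactly once in $V_\lambda^+\oplus V_\lambda^-$; combined with the lemma this forces $V_\mu^+\not\sqsubset V_\lambda^-$ and $V_\mu^-\not\sqsubset V_\lambda^+$, so the table is exact.

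The one genuinely delicate point is this last case: one must know that the $+1$-eigenspace of $\phi_\mu$ maps into the $+1$-eigenspace of $\phi_\lambda$ rather than into the $-1$-eigenspace, and this is exactly where the careful normalization (\ref{eq:1}) of the associators enters. Since that matching is provided by the preceding lemma, the rest of the argument — here and throughout — amounts to the routine bookkeeping of identifying $A_{n-1}$-isotypic pieces and, when $\mu\neq\mu'$, keeping track of the two conjugate summands $V_\mu$ and $V_{\mu'}$.
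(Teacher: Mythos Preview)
Your argument is correct and follows exactly the line the paper indicates: the paper does not write out a proof of Theorem~\ref{theorem:an-branching} at all, but prefaces the section by saying that the branching rules are easily deduced from those for symmetric groups, with the only delicate case being self-conjugate~$\mu$ inside self-conjugate~$\lambda$, which is the content of the preceding lemma. Your proposal simply fills in that routine bookkeeping explicitly, and your handling of the one subtle case (invoking the lemma, hence the normalization~(\ref{eq:1})) matches the paper's intent precisely.
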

The Bratteli diagram for $A_n$, for $2\leq n\leq 9$, is shown in Figure~\ref{fig:bratelli}.
\afterpage{\clearpage}
\begin{sidewaysfigure}[h!]
  \includegraphics[width=\textwidth]{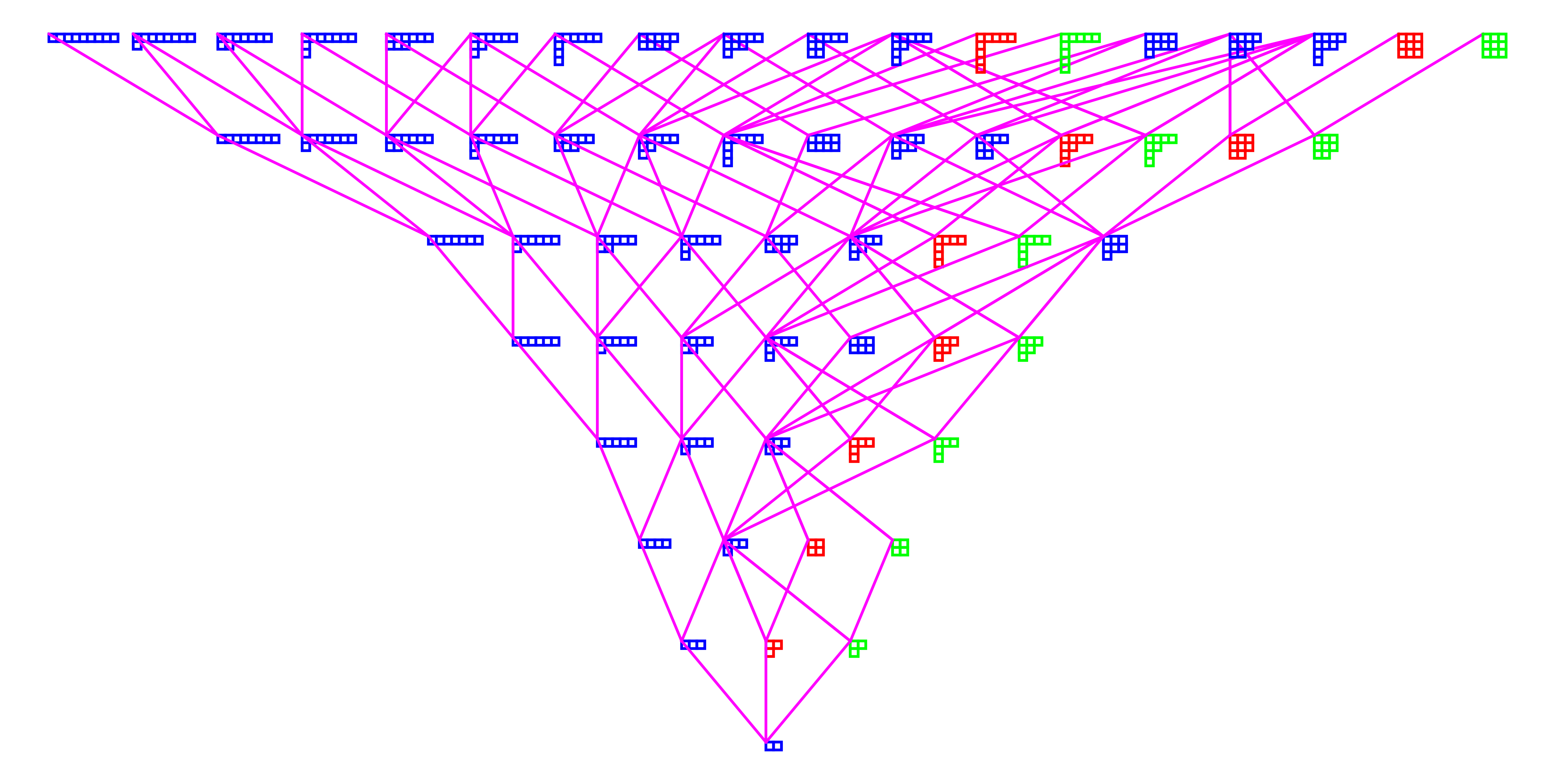}
  \caption{Bratteli diagram for alternating groups}
  \label{fig:bratelli}
\end{sidewaysfigure}

For self-conjugate partitions $\lambda$, $V_\lambda^+$ and $V_\lambda^-$ are distinguished by showing them in different colors, red and green respectively.
From each mutually conjugate pair $\{\lambda, \lambda'\}$ of non-self-conjugate partitions, we have chosen the one which comes first when all partitions are sorted in reverse lexicographic order.
\begin{remark}
  Formally, the branching rules given here coincide with those of Ruff \cite[Theorem~6.1]{doi:10.1142/S1005386708000382}.
  However, our careful normalizations of associators to identify $V_\lambda^+$ and $V_\lambda^-$ as subspaces of the representation $V_\lambda$ of $S_n$, for $\lambda$ self-conjugate, are new.
\end{remark}
\begin{remark}
  Note the edge joining $(4, 2^2)$ and $(3^2, 1)$ in Figure~\ref{fig:bratelli}.
  This is a valid edge even though $(3^2, 1)\notin (4,2^2)^\downarrow$.
  The conjugate of $(3^2,1)$ is $(3, 2^2)$, and $(3, 2^2)\in (4,2^2)^\downarrow$, so the representation $V_{(3^2,1)}$ of $A_7$, being isomorphic to $V_{(3,2^2)}$ occurs in the restriction of the representation $V_{(4,2^2)}$ of $A_8$ to $A_7$.
\end{remark}
\section{Geodesics for Alternating groups}
\label{sec:gt-alternating}

Let $\tilde\Theta(n)$ denote the set whose elements are of the form
\begin{enumerate}
\item $\lambda$, where $\lambda$ is a non-self-conjugate partition of $n$, or
\item $\lambda^\pm$, where $\lambda$ is a self-conjugate partition of $n$.
\end{enumerate}

For each $\alpha\in \tilde\Theta(n)$, define $V_\alpha$ as follows: if $\alpha$ is a non-self-conjugate partition, then $V_\alpha$ is the restriction of the irreducible representation of $S_n$ indexed by $\alpha$ to $A_n$.
If $\alpha = \lambda^\pm$, then $V_\alpha$ is the representation $V_\lambda^\pm$ of $A_n$ from Theorem~\ref{theorem:an-classification-reps}.
Define an equivalence relation on $\tilde\Theta(n)$, by setting $\alpha\sim \beta$ if either $\alpha=\beta$, or if both $\alpha$ and $\beta$ are non-self-conjugate partitions, and $\alpha = \beta'$.
Then the representations $V_\alpha$ and $V_\beta$ of $A_n$ are isomorphic if and only if $\alpha\sim \beta$.
The set $\Theta(n)$ of equivalence classes in $\tilde\Theta(n)$ is an index set for the set of isomorphism classes of irreducible representations of $A_n$.

Given $\alpha\in\tilde\Theta(n)$ and $\beta\in \tilde\Theta(n-1)$, say that $\beta\in \alpha^\dagger$ if one of the following occurs:
\begin{enumerate}
\item $\alpha$ and $\beta$ are non-self-conjugate partitions with $\beta\in \alpha^\downarrow$.
\item $\alpha$ is a non-self-conjugate partition, $\beta = \mu^\pm$ for a self-conjugate partition $\mu$, and $\mu\in \alpha^\downarrow$.
\item $\alpha = \lambda^\pm$ for some self-conjugate partition, $\beta$ is a non-self-conjugate partition, and $\beta\in \lambda^\downarrow$.
\item $\alpha = \lambda^+$, $\beta = \mu^+$ for self-conjugate partitions $\lambda$ and $\mu$ such that $\mu\in \lambda^\downarrow$.
\item $\alpha = \lambda^-$, $\beta = \mu^-$ for self-conjugate partitions $\lambda$ and $\mu$ such that $\mu\in \lambda^\downarrow$.
\end{enumerate}

For $\alpha\in \Theta(n)$, let $\tilde\S(\alpha)$ denote the set of all sequences of the form $\underline\alpha = (\alpha^{(2)},\dotsc, \alpha^{(n)})$, where $\alpha^{(i)}\in \tilde\Theta(i)$ for each $i$, $\alpha^{(i)}\in \alpha^{(i+1)\dagger}$ for $i=2,\dotsc, n-1$, and $\alpha^{(n)}= \alpha$.
Let $\geo(\alpha)$ denote the set of geodesic paths from $(2)$ to $\alpha$ in the Bratteli diagram of alternating groups.
By Theorem~\ref{theorem:an-branching}, each $\underline\alpha =(\alpha^{(2)},\dotsc,\alpha^{(n)})$ defines an element $\Gamma(\underline\alpha) = (V_{\alpha^{(2)}},\dotsc V_{\alpha^{(n)}}) \in \geo(\alpha)$.
Say that sequences $\underline\alpha = (\alpha^{(2)}, \dotsc, \alpha^{(n)})$ and $\underline\beta = (\beta^{(2)},\dotsc,\beta^{(n)})$ are equivalent, and write $\underline\alpha \sim \underline\beta$, if $\alpha^{(i)}\sim \beta^{(i)}$ for all $i=2,\dotsc,n$.
Thus $\Gamma(\underline\alpha) = \Gamma(\underline\beta)$ if and only if $\underline\alpha\sim\underline\beta$.

For each $\alpha\in \Theta(n)$, let $\S(\alpha)$ denote the set of equivalence classes in $\tilde\S(\alpha)$.
\begin{theorem}
  \label{theorem:parametrizing-paths}
  $\Gamma$ induces a bijection $\bar\Gamma:\S(\alpha)\to \geo(\alpha)$.
  Moreover, the cardinality of the class of $\underline\alpha = (\alpha^{(2)},\dotsc, \alpha^{(n)})$ in $\tilde\S(\alpha)$ is equal to $2^{r+1}$, where $r$ is the number of times $\underline\alpha$ ``branches from a self-conjugate partition to a non-self-conjugate partition'':
  \begin{multline*}
    r = \#\{2\leq i < n\mid \alpha^{(i)} =\mu^\pm \text{ for some } \mu = \mu',\\ \text{ and } \alpha^{(i+1)} = \lambda \text{ for some } \lambda \neq \lambda'\}.
  \end{multline*}
\end{theorem}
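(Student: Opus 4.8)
The bijection statement is nearly immediate from what precedes it: the excerpt already observes that $\Gamma$ sends $\tilde\S(\alpha)$ into $\geo(\alpha)$ (this is Theorem~\ref{theorem:an-branching}) and that $\Gamma(\underline\alpha)=\Gamma(\underline\beta)$ if and only if $\underline\alpha\sim\underline\beta$ (this is the fact that $V_\gamma\cong V_\delta$ iff $\gamma\sim\delta$). Hence $\Gamma$ descends to a well-defined injection $\bar\Gamma\colon\S(\alpha)\to\geo(\alpha)$, and the only thing left in this part is surjectivity. The plan for surjectivity is to lift a geodesic $(W_2,\dots,W_n)$ from the top down: having chosen $\gamma^{(i+1)}\in\tilde\Theta(i+1)$ with $V_{\gamma^{(i+1)}}\cong W_{i+1}$, use the edge $W_i\sqsubset W_{i+1}$ together with the branching table of Theorem~\ref{theorem:an-branching} to pick $\gamma^{(i)}\in\tilde\Theta(i)$ with $V_{\gamma^{(i)}}\cong W_i$ and $\gamma^{(i)}\in(\gamma^{(i+1)})^\dagger$. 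Each of the five clauses defining $\dagger$ matches one entry of the branching table, so this is a short case check: when $W_i$ is of non-self-conjugate type one takes whichever of the two conjugate partitions labelling $W_i$ actually lies below $\gamma^{(i+1)}$ in Young's graph, and when $W_i$ is of self-conjugate type the sign is forced by which eigenspace occurs. Then $\Gamma$ of the resulting sequence is $(W_2,\dots,W_n)$.

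For the cardinality formula, fix $\underline\alpha=(\alpha^{(2)},\dots,\alpha^{(n)})$; since $\bar\Gamma$ is injective, the class of $\underline\alpha$ is exactly $\Gamma^{-1}(\Gamma(\underline\alpha))$, so I must count sequences $\underline\beta$ with $V_{\beta^{(i)}}\cong V_{\alpha^{(i)}}$ for all $i$ and $\beta^{(i)}\in(\beta^{(i+1)})^\dagger$. Call position $i$ \emph{rigid} if $\alpha^{(i)}$ is a self-conjugate partition decorated with a sign (so $\beta^{(i)}=\alpha^{(i)}$ is forced, as $V_\mu^+\not\cong V_\mu^-$) and \emph{split} otherwise (so a priori $\beta^{(i)}\in\{\alpha^{(i)},(\alpha^{(i)})'\}$); note position $2$ is always split because $(2)$ is not self-conjugate. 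Group the split positions into maximal runs, which I will call blocks. The plan is to show that each block admits exactly two consistent labellings, ``flip every entry of the block'' or ``flip none'', that these choices are independent across blocks, and that every such choice extends to a genuinely admissible $\underline\beta$. This gives $2^{b}$ admissible sequences, where $b$ is the number of blocks; since a block starts either at position $2$ or immediately after a rigid position, $b=1+r$, and the count is $2^{r+1}$.

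The two facts that make this work are statements about Young's graph under conjugation, and establishing them is the crux. First, if $\mu\neq\mu'$ and both $\mu,\mu'\in\lambda^\downarrow$, then a size count forces the Young diagram of $\lambda$ to be the union of those of $\mu$ and $\mu'$, whence $\lambda=(\mu\cup\mu')'=\lambda'$; so if $\lambda\neq\lambda'$ then $\mu$ and $\mu'$ cannot both lie in $\lambda^\downarrow$. Applied inside a block, where $\alpha^{(i)},\alpha^{(i+1)}$ and their conjugates are all non-self-conjugate, this says the only pairs $(\beta^{(i)},\beta^{(i+1)})$ satisfying clause~(1) of $\dagger$ are $(\alpha^{(i)},\alpha^{(i+1)})$ and $((\alpha^{(i)})',(\alpha^{(i+1)})')$: flipping one entry forces flipping its neighbour, so the block moves as a unit. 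Second, if $\nu=\nu'$ and $\mu\in\nu^\downarrow$ with $\mu\neq\mu'$, then removing the conjugate cell shows $\mu'\in\nu^\downarrow$ too, and dually $\nu\in\mu^\downarrow$ implies $\nu\in(\mu')^\downarrow$. Hence a rigid position $\alpha^{(k)}=\nu^\pm$ adjacent to a block imposes, through clause~(2) or~(3) of $\dagger$, a condition satisfied equally by $\alpha^{(k\mp1)}$ and by its conjugate; so no block is ever ``pinned'' by a neighbouring rigid position, both choices for the block are globally admissible, and the choices for distinct blocks do not interact. Putting this together gives the factor $2^{1+r}$. I expect the first conjugation lemma, and the bookkeeping that turns ``rigid inside a block, free at its ends'' into a clean power of two, to be the main obstacle; the surjectivity of $\bar\Gamma$ and its well-definedness and injectivity are comparatively routine once the branching table is in hand.
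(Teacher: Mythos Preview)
Your argument is correct. The paper proves the theorem by a short induction on $n$: it observes that every element of $\tilde\S(\alpha)$ is obtained by extending an element of some $\tilde\S(\beta)$ with $\beta\in\alpha^\dagger$, and that the number of such extensions is $2$ when $\beta$ is of self-conjugate type and $\alpha$ is not (append either $\alpha$ or $\alpha'$) and $1$ otherwise; multiplying these factors reproduces $2^{r+1}$ and simultaneously gives surjectivity. Your proof unwinds this same induction into a direct block count: you partition the split positions into maximal runs, argue each run admits exactly two coherent labellings, and identify the number of runs as $1+r$. The two approaches are essentially equivalent---each new block in your decomposition is exactly a rigid-to-split transition contributing a factor of $2$ in the paper's induction.

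Where your write-up adds value is in making explicit the combinatorial fact that the paper's ``in all other cases there is only one possible extension'' actually relies on: namely, that if $\mu\neq\mu'$ and both $\mu,\mu'\in\lambda^\downarrow$ then $\lambda=\lambda'$ (equivalently, a non-self-conjugate $\lambda$ cannot cover a conjugate pair). You prove this cleanly via $\lambda=\mu\cup\mu'$, and it is precisely what prevents a split-to-split step from having two extensions. The paper's inductive proof uses this implicitly but does not state it, so your version is the more complete of the two.
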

Before proving Theorem~\ref{theorem:parametrizing-paths}, we illustrate it with an example.
Consider $\underline\alpha \in \tilde\S(4,1^2)$ given by:
\begin{displaymath}
  \underline\alpha = ((2), (2, 1)^+, (3, 1), (3,1^2)^+, (4,1^2)).
\end{displaymath}
This sequence branches from self-conjugate to non-self-conjugate stages two times;
once from $(2,1)^+$ to $(3,1)$ and again from $(3,1^2)^+$ to $(4,1^2)$.
The eight elements of the class of $\underline\alpha$ in $\tilde\S(\alpha)$ are:
\begin{gather*}
  ((2), (2, 1)^+, (3, 1), (3,1^2)^+, (4,1^2))\\
  ((2), (2, 1)^+, (3, 1), (3,1^2)^+, (3,1^3))\\
  ((2), (2, 1)^+, (2, 1^2), (3,1^2)^+, (4,1^2))\\
  ((2), (2, 1)^+, (2, 1^2), (3,1^2)^+, (3,1^3))\\
  ((1^2), (2, 1)^+, (3, 1), (3,1^2)^+, (4,1^2))\\
  ((1^2), (2, 1)^+, (3, 1), (3,1^2)^+, (3,1^3))\\
  ((1^2), (2, 1)^+, (2, 1^2), (3,1^2)^+, (4,1^2))\\
  ((1^2), (2, 1)^+, (2, 1^2), (3,1^2)^+, (3,1^3))
\end{gather*}
\begin{proof}
  Theorem~\ref{theorem:parametrizing-paths} is proved by induction on $n$.
  For $n=2$, $r=0$, and $\tilde \S((2))$ has two elements, namely $((2))$ and $((1^2))$, so the theorem holds.

  Now suppose $\alpha\in \tilde\Theta(n)$.
  Every geodesic in $\geo(\alpha)$ is an extension of a unique geodesic $\gamma\in \geo(\beta)$ for some $\beta\in \alpha^\dagger$.
  If $\beta$ is of the form $\mu^\pm$ for some self-conjugate partition $\mu$ of $n-1$, and $\alpha$ is a non-self-conjugate partition, then $\mu\in \alpha^\dagger$, and also in $(\alpha')^\dagger$.
  By appending $\alpha$ or $\alpha'$ to $\gamma$, one obtains two distinct elements of $\tilde \S(\alpha)$ which extend $\gamma$.
  In all other cases, there is only one possible extension.
  The theorem now follows by induction.
\end{proof}

\section{Gelfand-Tsetlin bases for alternating groups}
\label{sec:gelf-tsetl-bases}

Given $\mu\in \lambda^\downarrow$, denote by $v^\lambda$ the image in $V_\lambda$ of $v\in V_\mu$ under the embedding defined by (\ref{eq:9}).
\begin{definition}
  Define
  \begin{equation}
    \label{eq:7}
    u_{(2)} = v_{\ytableaushort{12}},\quad u_{(1^2)} = v_{\ytableaushort{1,2}},
  \end{equation}
  Given $\underline\alpha = (\alpha^{(2)},\dotsc,\alpha^{(n)})\in \tilde \S(\alpha)$, let $\underline\alpha^*= (\alpha^{(2)},\dotsc,\alpha^{(n-1)})$.
  Recursively define $u_{\underline\alpha}$ as follows:
  If $\alpha$ is of the form $\lambda^\pm$ for a self-conjugate shape $\lambda$, and $\alpha^{(n-1)}$ is a non-self-conjugate partition of $n-1$, define $u_{\underline{\alpha}}\in V_\lambda$ by:
  \begin{equation}
    \label{eq:3}
    u_{\underline{\alpha}} = u_{\underline\alpha^*}^\lambda \pm \phi_\lambda(u_{\underline\alpha^*}^\lambda).
  \end{equation}
  If $\alpha=\alpha^{(n)}$ is of the form $\lambda^\pm$ for a self-conjugate $\lambda$, and $\alpha^{(n-1)} = \mu^{\pm}$ for self-conjugate $\mu\in \lambda^\downarrow$, define $u_{\underline{\alpha}}\in V_\lambda$ by:
  \begin{equation}
    \label{eq:8}
    u_{\underline{\alpha}} = u_{\underline\alpha^*}^\lambda.
  \end{equation}
  If $\alpha$ is a non-self-conjugate partition, then define
  \begin{equation}
    \label{eq:10}
    u_{\underline{\alpha}} = u_{\underline\alpha^*}^\alpha.
  \end{equation}
\end{definition}
\begin{example}
  Consider
  \begin{displaymath}
    \underline{\alpha} = ((2), (2, 1)^+, (3, 1), (3, 1^2)^-, (4, 1^2)).
  \end{displaymath}
  We compute recursively:
  \Small{
  \begin{align*}
    u_{((2),(2,1)^+)} & = v_{\ytableaushort{12,3}} + i v_{\ytableaushort{13,2}},\\
    u_{((2),(2,1)^+, (3,1))} & =  v_{\ytableaushort{124,3}} + i v_{\ytableaushort{134,2}},\\
    u_{((2),(2,1)^+, (3,1), (3,1^2)^-)} & =  v_{\ytableaushort{124,3,5}} + i v_{\ytableaushort{134,2,5}} - v_{\ytableaushort{135,2,4}} + i v_{\ytableaushort{125,3,4}},\\
    u_{((2),(2,1)^+, (3,1), (3,1^2)^-, (4, 1^2))} & =  v_{\ytableaushort{1246,3,5}} + i v_{\ytableaushort{1346,2,5}} - v_{\ytableaushort{1356,2,4}} + i v_{\ytableaushort{1256,3,4}}.
  \end{align*}
}
  Here we have used $\phi_{(2,1)}(v_{\ytableaushort{12,3}}) = iv_{\ytableaushort{13,2}}$, $\phi_{(3,1^2)}(v_{\ytableaushort{124,3,5}}) = v_{\ytableaushort{135,2,4}}$, and that $\phi_\lambda(v_{\ytableaushort{134,2,5}}) = -v_{\ytableaushort{125,3,4}}$.
\end{example}
\begin{theorem}
  [Main Theorem]
  For each $\alpha\in \tilde\Theta(n)$ and each geodesic $\gamma\in \geo(\alpha)$, if $\underline\alpha \in \tilde \S(\alpha)$ is such that $\Gamma(\underline\alpha) = \gamma$, then $u_{\underline\alpha}$ is a Gelfand-Tsetlin basis vector of $V_\alpha$ corresponding to $\gamma$.
  As $\underline\alpha$ runs over a set of representatives of paths in $\S(\alpha)$ all ending in $\alpha^{(n)}=\alpha$, a Gelfand-Tsetlin basis of $V_\alpha$ is obtained.
\end{theorem}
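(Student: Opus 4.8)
The plan is to argue by induction on $n$, using the recursive description of Gelfand--Tsetlin subspaces from Lemma~\ref{lemma:recursive}, applied to the chain $A_2\subset A_3\subset\cdots$; the standing hypotheses of Section~\ref{sec:gt-bases} are satisfied here because, as Ruff showed, the restriction of every irreducible representation of $A_n$ to $A_{n-1}$ is multiplicity-free. The base case $n=2$ is immediate, since $A_2$ is trivial and $u_{(2)}$, $u_{(1^2)}$ are nonzero vectors of the one-dimensional spaces $V_{(2)}$, $V_{(1^2)}$.

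For the inductive step, fix $\alpha\in\tilde\Theta(n)$, $\underline\alpha=(\alpha^{(2)},\dotsc,\alpha^{(n)})\in\tilde\S(\alpha)$, and write $\beta=\alpha^{(n-1)}$, $\underline\alpha^{*}=(\alpha^{(2)},\dotsc,\alpha^{(n-1)})$, $\gamma=\Gamma(\underline\alpha)$ and $\gamma^{*}=\Gamma(\underline\alpha^{*})$. By induction $u_{\underline\alpha^{*}}$ spans the Gelfand--Tsetlin subspace $V_{\gamma^{*}}\subseteq V_\beta$ attached to $\gamma^{*}$, and in particular is nonzero. By Lemma~\ref{lemma:recursive} it then suffices to produce an $A_{n-1}$-equivariant linear map $\psi\colon V_\beta\to\mathrm{Res}_{A_{n-1}}V_\alpha$ with $\psi(u_{\underline\alpha^{*}})$ a nonzero scalar multiple of $u_{\underline\alpha}$: since $V_\beta$ is irreducible (Theorem~\ref{theorem:an-classification-reps}) and $\mathrm{Res}_{A_{n-1}}V_\alpha$ is multiplicity-free, any such nonzero $\psi$ is injective with image the $V_\beta$-isotypic component of $\mathrm{Res}_{A_{n-1}}V_\alpha$, so (Schur's lemma making this independent of the choice of $\psi$) it carries $V_{\gamma^{*}}$ onto the Gelfand--Tsetlin subspace of $V_\alpha$ attached to $\gamma$, which is thus spanned by $u_{\underline\alpha}$.

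The map $\psi$ is read off from the three clauses of the definition of $u_{\underline\alpha}$. If $\alpha$ is a non-self-conjugate partition (formula (\ref{eq:10})), or if $\alpha=\lambda^{\pm}$ and $\beta=\mu^{\pm}$ with $\mu,\lambda$ self-conjugate and $\mu\in\lambda^{\downarrow}$ (formula (\ref{eq:8})), one takes $\psi$ to be the embedding (\ref{eq:9}), further restricted to $V_\mu^{\pm}$ when $\beta$ has that form; in the second case one uses the lemma preceding Theorem~\ref{theorem:an-branching}, i.e. $\phi_\lambda|_{V_\mu}=\phi_\mu$, to see that (\ref{eq:9}) sends $V_\mu^{\pm}$ into $V_\lambda^{\pm}$. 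Since (\ref{eq:9}) is injective, $u_{\underline\alpha}\neq0$ in both of these cases. In the remaining case $\alpha=\lambda^{\pm}$ and $\beta=\mu$ is a non-self-conjugate partition with $\mu\in\lambda^{\downarrow}$ (formula (\ref{eq:3})); here I take $\psi=\tfrac12(\mathrm{id}\pm\phi_\lambda)$ composed with (\ref{eq:9}). By (\ref{eq:2}) the operator $\phi_\lambda$ commutes with the $A_n$-action, so $\tfrac12(\mathrm{id}\pm\phi_\lambda)$ is the $A_n$-equivariant projection of $V_\lambda$ onto $V_\lambda^{\pm}$, and $\psi(u_{\underline\alpha^{*}})=\tfrac12 u_{\underline\alpha}$. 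The one point requiring a genuine argument — and the step I expect to be the main obstacle — is the nonvanishing of $u_{\underline\alpha}=u_{\underline\alpha^{*}}^{\lambda}\pm\phi_\lambda(u_{\underline\alpha^{*}}^{\lambda})$. By construction $u_{\underline\alpha^{*}}^{\lambda}$ is a nonzero combination of those Young basis vectors $v_S$, $S\in\T(\lambda)$, for which $S^{\downarrow}$ has shape $\mu$; by Lemma~\ref{lemma:conj}, $\phi_\lambda$ sends each such $v_S$ to a scalar multiple of $v_{S'}$, and $S'^{\downarrow}$ has shape $\mu'$. Since $\beta=\mu$ is non-self-conjugate we have $\mu\neq\mu'$, so $u_{\underline\alpha^{*}}^{\lambda}$ and $\phi_\lambda(u_{\underline\alpha^{*}}^{\lambda})$ are supported on disjoint subsets of Young's basis; being both nonzero they are linearly independent, whence $u_{\underline\alpha^{*}}^{\lambda}\pm\phi_\lambda(u_{\underline\alpha^{*}}^{\lambda})\neq0$.

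Finally, the basis statement is formal. We have shown that for every $\underline\alpha\in\tilde\S(\alpha)$ the vector $u_{\underline\alpha}$ spans the one-dimensional summand of the Gelfand--Tsetlin decomposition (\ref{eq:13}) of $V_\alpha$ indexed by $\Gamma(\underline\alpha)$. By Theorem~\ref{theorem:parametrizing-paths}, $\underline\alpha\mapsto\Gamma(\underline\alpha)$ induces a bijection $\S(\alpha)\to\geo(\alpha)$; so as $\underline\alpha$ runs over a set of representatives of $\S(\alpha)$, the vectors $u_{\underline\alpha}$ select exactly one nonzero vector from each summand of (\ref{eq:13}), and therefore form a Gelfand--Tsetlin basis of $V_\alpha$.
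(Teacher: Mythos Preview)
Your proof is correct and follows essentially the same inductive strategy as the paper: both invoke Lemma~\ref{lemma:recursive}, carry out the same three-case analysis on $(\alpha,\beta)$, and in the critical case $\alpha=\lambda^{\pm}$, $\beta=\mu\neq\mu'$ identify $u_{\underline\alpha}$ as the image of $u_{\underline\alpha^{*}}$ under the $A_{n-1}$-embedding $v\mapsto v^{\lambda}\pm\phi_\lambda(v^{\lambda})$ of $V_\mu$ into $V_\lambda^{\pm}$. Your write-up is in fact slightly more explicit than the paper's on one point: you spell out the nonvanishing of $u_{\underline\alpha}$ via the disjoint-support argument (shapes $\mu$ versus $\mu'$), whereas the paper leaves this implicit in calling $\mathrm{id}_{V_\mu}\oplus\pm\phi_\mu$ an ``isomorphism onto its image''.
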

\begin{remark}
  If $\alpha$ is a non-self-conjugate partition, the class of a path $\underline\alpha$ in $\S(\alpha)$ contains paths with $\alpha^{(n)} = \alpha$ as well as paths with $\alpha^{(n)}\in \alpha'$.
  The condition that all the representatives $\underline\alpha$ have $\alpha^{(n)}=\alpha$ ensures that the basis vectors all lie in the same representation space $V_\alpha$.
\end{remark}
\begin{proof}
  The proof is by induction on $n$.
  The result holds for $n=2$ by definition (\ref{eq:7}).
  Suppose $\underline\alpha = (\alpha^{(2)},\dotsc,\alpha^{(n)})\in \tilde \S(\alpha)$.
  By Lemma~\ref{lemma:recursive}, in order to conclude that $u_{\underline\alpha}$ is a Gelfand-Tsetlin vector in $V_\alpha$ corresponding to $\Gamma(\underline\alpha)$, we need to show that it lies in the $V_{\alpha^{(n-1)}}$ isotypic subspace of $V_\alpha$,  and that, as a vector in that space, it is the isomorphic image of a Gelfand-Tsetlin vector corresponding to $\Gamma(\underline\alpha^*)$.
When $\alpha$ is a non-self-conjugate partition, this is clear.

Now suppose that $\alpha = \lambda^\pm$ for some self-conjugate partition $\lambda$ and also that $\alpha^{(n-1)}=\mu^\pm$ for some self-conjugate partition $\mu\in\lambda^\downarrow$.
Then the Young diagrams of $\lambda$ and $\mu$ differ by one cell, which lies on the principal diagonal.
Our definition (\ref{eq:1}) ensures that, the restriction of $\phi_\lambda$ to $V_\mu$ is $\phi_\mu$, so $u_{\underline\alpha}$ is an eigenvector for $\phi_\lambda$ with eigenvalue $+1$ if $\alpha = \lambda^+$, and with eigenvalue $-1$ if $\alpha=\lambda^-$.
Therefore $u_{\underline\alpha}\in V_\lambda^{\pm}$ as required.
By induction, $u_{\alpha^*}$ is a Gelfand-Tsetlin vector corresponding to $\alpha^*$, and the result follows by Lemma~\ref{lemma:recursive}.

Finally consider the case where $\alpha = \lambda^\pm$ for some self-conjugate partition $\lambda$ and $\alpha^{(n-1)}$ is a non-self-conjugate partition $\mu$ in $\lambda^\downarrow$.
Then (\ref{eq:3}) ensures that $u_{\underline\alpha}$ is an eigenvector for $\phi_\lambda$ with eigenvalue $\pm 1$, and hence lies in $V_\lambda^\pm$.
But when the representation $V_\lambda$ of $S_n$ is restricted to $S_{n-1}$, this vector is a sum of two vectors, one in $V_\mu$ and another in $V_{\mu'}$.
However, $u_{\underline\alpha}$ is the image under the isomorphism $\mathrm{id}_{V_\mu}\oplus \pm\phi_{\mu}$, from $V_\mu$ onto its image in $V_\mu\oplus V_{\mu'}$, of $u_{\underline\alpha^*}$, which is a Gelfand-Tsetlin vector corresponding to $\underline\alpha^*$ by induction.
\end{proof}
\begin{example}
  Representatives in $\tilde\S((4, 1^2))$ of the ten geodesics in \linebreak $\geo((4, 1^2))$ and the corresponding Gelfand-Tsetlin vectors are given by:
  \begin{enumerate}
  \item $\underline\alpha = ((1^2)$, $(1^3)$, $(2, 1^2)$, $(3, 1^2)^+$, $(4, 1^2))$, with
    \begin{displaymath}
      u_{\underline\alpha} = v_{\ytableaushort{1456,2,3}} - v_{\ytableaushort{1236,4,5}},
    \end{displaymath}
  \item $\underline\alpha = ((1^2)$, $(2, 1)^+$, $(2, 1^2)$, $(3, 1^2)^+$, $(4, 1^2))$, with
    \begin{displaymath}
      u_{\underline\alpha} = v_{\ytableaushort{1356,2,4}} + v_{\ytableaushort{1246,3,5}} - iv_{\ytableaushort{1256,3,4}} + iv_{\ytableaushort{1346,2,5}}.
    \end{displaymath}
  \item $\underline\alpha = ((1^2)$, $(2, 1)^-$, $(2, 1^2)$, $(3, 1^2)^+$, $(4, 1^2))$ with
    \begin{displaymath}
      u_{\underline\alpha} = v_{\ytableaushort{1356,2,4}} + v_{\ytableaushort{1246,3,5}} + iv_{\ytableaushort{1256,3,4}} - iv_{\ytableaushort{1346,2,5}}.
    \end{displaymath}
  \item $\underline\alpha = ((1^2)$, $(1^3)$, $(2, 1^2)$, $(3, 1^2)^-$, $(4, 1^2))$ with
    \begin{displaymath}
      u_{\underline\alpha} = v_{\ytableaushort{1456,2,3}} + v_{\ytableaushort{1236,4,5}}.
    \end{displaymath}
  \item $\underline\alpha = ((1^2)$, $(2, 1)^+$, $(2, 1^2)$, $(3, 1^2)^-$, $(4, 1^2))$ with
    \begin{displaymath}
      u_{\underline\alpha} = v_{\ytableaushort{1356,2,4}} - v_{\ytableaushort{1246,3,5}} - iv_{\ytableaushort{1256,3,4}} - iv_{\ytableaushort{1346,2,5}}.
    \end{displaymath}
  \item $\underline\alpha = ((1^2)$, $(2, 1)^-$, $(2, 1^2)$, $(3, 1^2)^-$, $(4, 1^2))$ with
    \begin{displaymath}
      u_{\underline\alpha} = v_{\ytableaushort{1356,2,4}} - v_{\ytableaushort{1246,3,5}} + iv_{\ytableaushort{1256,3,4}} + iv_{\ytableaushort{1346,2,5}}.
    \end{displaymath}
  \item $\underline\alpha = ((1^2)$, $(2, 1)^+$, $(3, 1)$, $(4, 1)$, $(4, 1^2))$ with
    \begin{displaymath}
      u_{\underline\alpha} = v_{\ytableaushort{1345,2,6}} - iv_{\ytableaushort{1245,3,6}}.
    \end{displaymath}
  \item $\underline\alpha = ((1^2)$, $(2, 1)^-$, $(3, 1)$, $(4, 1)$, $(4, 1^2))$ with
    \begin{displaymath}
      u_{\underline\alpha} = v_{\ytableaushort{1345,2,6}} + iv_{\ytableaushort{1245,3,6}}.
    \end{displaymath}
  \item $\underline\alpha = ((2)$, $(3)$, $(3, 1)$, $(4, 1)$, $(4, 1^2))$ with
    \begin{displaymath}
      u_{\underline\alpha} = v_{\ytableaushort{1235,4,6}}.
    \end{displaymath}
  \item $\underline\alpha = ((2)$, $(3)$, $(4)$, $(4, 1)$, $(4, 1^2))$ with
    \begin{displaymath}
      u_{\underline\alpha} = v_{\ytableaushort{1234,5,6}}.
    \end{displaymath}
  \end{enumerate}
\end{example}
This example was generated using Sage Mathematical Software~\cite{sage}.
The code is available from \url{http://www.imsc.res.in/\~amri/gt_alt.sage}.
\begin{remark}
  We chose to express the Gelfand-Tsetlin basis vectors of representations of $A_n$ in terms of Young's orthogonal basis because of the ease of computing associators in terms of this basis.
  The simple form of the formula (\ref{eq:6}) is a special property of Young's orthogonal basis.
  The bases obtained here depend on the choice of representatives in $\tilde S(\alpha)$ of geodesics in $\geo(\alpha)$.
  For distinct $\underline{\alpha}, \underline{\beta}\in \tilde S(\alpha)$ having $\Gamma(\underline{\alpha})=\Gamma(\underline{\beta})$, $u_{\underline{\alpha}}$ and $u_{\underline{\beta}}$ need not be equal.
  But being Gelfand-Tsetlin vectors for the same geodesic, these vectors are scalar multiples of each other.
  Since the number of (mutually orthogonal) terms in their expansions are the same, these scalars must be unitary.
  Since all the coefficients in the expansions are fourth roots of unity, we may conclude that these scalars are actually fourth roots of unity.
  For example:
  \begin{displaymath}
    u_{((2), (2,1)^+)} = v_{\ytableaushort{12,3}} + iv_{\ytableaushort{13,2}},
  \end{displaymath}
  while
  \begin{displaymath}
    u_{((1,1), (2,1)^+)} = -i v_{\ytableaushort{12,3}} + v_{\ytableaushort{13,2}}.
  \end{displaymath}
\end{remark}
\subsection*{Acknowledgements}
GT was supported by an Alexander von Humboldt Fellowship.
This paper was written while AP was visiting the University of Stuttgart.
He thanks Steffen K\"onig for supporting this visit.
He was also supported by a Swarnajayanti fellowship of the Department of Science \& Technology (India).
The authors thank the referees for their careful reading of this manuscript, and their suggestions for its improvement.
\bibliographystyle{abbrv}
\bibliography{refs}
\end{document}